\documentclass[runningheads,12pt]{article}

\usepackage{amsmath}
\usepackage{amsfonts}

\setcounter{MaxMatrixCols}{10}

\newtheorem{theorem}{Theorem}

\newtheorem{definition}[theorem]{Definition}
\newtheorem{example}[theorem]{Example}

\newtheorem{proposition}[theorem]{Proposition}
\newtheorem{remark}[theorem]{Remark}

\newenvironment{proof}[1][Proof]{\noindent \textbf{#1.} }{\  \rule{0.5em}{0.5em}}

\begin{document}
\title{\mbox{The paradox of the infinity}}

\author{MOHAMED AYAD and OMAR KIHEL}

\date{}

\maketitle

\pagenumbering{arabic}
\section{Introduction}

Let $(\bf P)$ be a property defined on the set of the natural numbers and $A_{n}$ be the set of positive integers less or equal $n$, where $n$ is a positive integer. If an integer is selected at random from $A_{n}$, then the probability $\sigma_{n}$ that it satisfies $(\bf P)$ is the number of elements in $A_{n}$ that satisfy $(\bf P)$ divided by $n$. If the limit of $\sigma_{n}$ when $n$ tends to infinity exists, this limit is called the natural density or asymptotic density of the integers satisfying $(\bf P)$. Intuitively, this density measures how frequently an integer satisfies $(\bf P)$. One can define analogously the natural density on any infinite subset of the set of natural numbers. Let's consider the following situation:

\textit{Let $E$ be an infinite set on which a property $(\bf P)$ is defined. Suppose that $E=\cup_{i\in I} E_i$ is a partition, where each $E_i$ is infinite. Suppose also that, in each $E_i$, the number of elements satisfying $(\bf P)$ is finite. Then, clearly the density of the elements satisfying $(\bf P)$ is 0 in every $E_i$. Is it possible that the density of the subset of $E$ containing all the elements satisfying $(\bf P)$ will be at least equal to $ 1/2$?}

We were first confronted with this situation while reading the paper of Arno et al. [1].  In fact, it is in the paper [1] where it is shown that the density of certain algebraic numbers in $\overline{\mathbb{Q}}$, which we will call Arno et al. numbers in section 5, is equal to $1/\zeta(3)$.  We have partitioned $\overline{\mathbb{Q}}$ in a way that suggests these Arno et al. numbers are rare.  This phenomenom struck us as contradictory, which lead us to consider the situation in greater detail. We will show in the sequel, through two examples, that the answer to the above question may be positive. At first glance, this problem resembles to the so called Simpson paradox in probability and statistics. In this paper, when we say the density, we mean the natural density.

 \section{First example}
Let  $E=\mathbb{N}\setminus\{0,1\}$. In this set the relation  $(\bf P)$ will be : $x$ is even. For any  $i\geq 1$, let
\begin{equation*}
E_i= \{ 2i\}\cup \{2^ik+1,k\geq 1\text{ odd}\}
\end{equation*}
The  $E_i$ constitute a partition of $E$; each class contains a unique element satisfying $(\bf P)$. On the other hand, the density in $E$ of the elements satisfying $(\bf P)$ is equal to $1/2$.\\

Before producing the second example, we need to state, in the following section, some results on the denominator and constant coefficient of an algebraic number. The reader who is familiar with these notions may skip this section.
\section{Denominator of an algebraic number}
Let $\gamma$ be an algebraic number of degree $n$, \begin{equation*}g(x)=c_nx^n+c_{n-1}x^{n-1}+\ldots+c_1x+c_0\end{equation*} be the unique  irreducible polynomial with integral coefficients such that $g(\gamma)=0$ and $c_n>0$. This polynomial will be called the minimal polynomial of $\gamma$ over $\mathbb{Z}$. Let
\begin{equation} I=I(\gamma)=\{k\in\mathbb{Z},\,\, k\gamma\,\,\mbox{an algebraic integer}\}
\end{equation}
Then $I(\gamma)$ is a principal ideal of $\mathbb{Z}$. Since $c_n\gamma$ is an algebraic integer, then $c_n\in I(\gamma)$. Hence $I(\gamma)\neq(0)$. Let $d>0$ be a generator of $I(\gamma)$, then $d$ is the smallest positive integer such that $d\gamma$ is integral. Moreover $d\mid c_n$. Call this integer $d$ the denominator of $\gamma$. Let $\theta=d\gamma$, then $\theta$ is an algebraic integer, called the numerator of $\gamma$. The leading coefficient  $c_n$ of $g(x)$ will be called the leading coefficient of $\gamma$. Denote by $d(\gamma)$ and $c(\gamma)$ the denominator and the leading coefficient of $\gamma$ respectively.

\begin{proposition}
Let $\gamma$ be an algebraic number  of degree $n$, $\gamma_1,\ldots,\gamma_n$ be the  list of its conjugates over $\mathbb{Q}$. For any $i=1,\ldots,n$, let $s_i(\overrightarrow{\gamma})=s_i(\gamma_1,\ldots,\gamma_n)$ be the elementary symmetric function of degree $i$ of the $\gamma_j$. Let
\begin{equation}J=J(\gamma)=\{k\in\mathbb{Z},\,\,ks_i(\vec{\gamma})\in\mathbb{Z}\,\mbox{for}\,\,i=1,\ldots,n\},
\end{equation}
then $J$ is a principal ideal of  $\mathbb{Z}$ generated by $c(\gamma)$. Moreover $c(\gamma)\mid d^n$.
\end{proposition}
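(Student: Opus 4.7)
The plan is to prove the proposition in three steps: first show $c(\gamma)$ lies in $J$, then show every element of $J$ is a multiple of $c(\gamma)$, and finally establish the divisibility $c(\gamma)\mid d^n$.

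For the first step, I would factor the minimal polynomial over $\overline{\mathbb{Q}}$ as $g(x)=c_n\prod_{j=1}^{n}(x-\gamma_j)$ and expand the right-hand side. Comparing coefficients gives $c_{n-i}=(-1)^i c_n\, s_i(\vec{\gamma})$ for each $i=1,\ldots,n$, so $c_n s_i(\vec{\gamma})\in\mathbb{Z}$, which shows $c(\gamma)=c_n\in J$, i.e.\ $(c(\gamma))\subseteq J$.

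For the reverse inclusion, suppose $k\in J$. Then the polynomial
\begin{equation*}
h(x)=k\prod_{j=1}^{n}(x-\gamma_j)=kx^n+\sum_{i=1}^{n}(-1)^i k\, s_i(\vec{\gamma})\,x^{n-i}
\end{equation*}
has all coefficients in $\mathbb{Z}$ and vanishes at $\gamma$. Since $g(x)$ is irreducible in $\mathbb{Z}[x]$ it is primitive, so by Gauss's lemma $g(x)$ divides $h(x)$ in $\mathbb{Z}[x]$. Both polynomials have degree $n$, so the quotient is an integer constant; comparing leading coefficients yields $c_n\mid k$, proving $J\subseteq (c(\gamma))$ and hence $J=(c(\gamma))$. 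I expect this step to be the main technical hinge: one must invoke primitivity of the minimal polynomial (itself a small observation, since any common divisor of the coefficients would give a nontrivial factorization in $\mathbb{Z}[x]$) and then apply Gauss's lemma to transfer divisibility from $\mathbb{Q}[x]$ to $\mathbb{Z}[x]$.

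For the last assertion, I would use that $d\gamma$ is an algebraic integer, whence each conjugate $d\gamma_j$ is also an algebraic integer. The elementary symmetric expression $s_i(d\gamma_1,\ldots,d\gamma_n)=d^i s_i(\vec{\gamma})$ is therefore an algebraic integer, and since $s_i(\vec{\gamma})\in\mathbb{Q}$ it actually lies in $\mathbb{Z}$. Writing $d^n s_i(\vec{\gamma})=d^{n-i}\bigl(d^i s_i(\vec{\gamma})\bigr)\in\mathbb{Z}$ for every $i$, we conclude $d^n\in J=(c(\gamma))$, i.e.\ $c(\gamma)\mid d^n$, as required.
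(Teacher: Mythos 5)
Your proof is correct and takes essentially the same route as the paper's: comparing coefficients of $c_n\prod(x-\gamma_j)$ to get $c(\gamma)\in J$, a Gauss's lemma/content argument on $k\prod(x-\gamma_j)$ for the reverse inclusion, and the observation that the $d\gamma_j$ are algebraic integers to get $d^n\in J$. The only cosmetic difference is that you prove $J\subseteq(c(\gamma))$ directly for an arbitrary $k\in J$, whereas the paper applies the same content argument to a chosen generator $c'$ of $J$ and deduces $c\mid c'$ and $c'\mid c$.
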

\begin{proof} Let $c^{'}$ be a  generator of $J(\gamma)$ and let $d=d(\gamma)$. Set $\gamma=\theta/d$ where $\theta$ is an algebraic integer. Since
\begin{equation*}
d^ns_k(\gamma)=d^n\sum\gamma_{i_1}\cdots\gamma_{i_k}=d^n\sum(\theta_{i_1}/d)\cdots(\theta_{i_k}/d)
\end{equation*}
then $d^n\in J(\gamma)$ thus $c^{'} \mid d^n$.
Let $c=c(\gamma)$ and let $g(x)$ be the minimal polynomial of
 $\gamma$ over $\mathbb{Z}$, then
\begin{align*} g(x)&=c(x-\gamma_1)\cdots(x-\gamma_n)\\
&=c\big(x^n-s_1(\gamma)x^{n-1}+\ldots+(-1)^ns_n(\gamma)\big)\\
&=cx^n-cs_1(\gamma)x^{n-1}+\ldots+(-1)^ncs_n(\gamma).\end{align*}
Since $g(x)\in\mathbb{Z}[x]$ then $cs_k(\vec{\gamma}) \in\mathbb{Z}$ for $k=1,\ldots,n$, hence $c\in J(\gamma)$, which implies $c^{'}\mid c$.
On the other hand let
\begin{align*} g_1(x)&=c^{'}(x-\gamma_1)\cdots(x-\gamma_n)\\
&=c^{'}x^n-c^{'}s_1(\gamma)x^{n-1}+\ldots+(-1)^nc^{'}s_n(\gamma)\\
&\in\mathbb{Z}[x].
\end{align*}
Since $g_1(\gamma)=0$, then $g(x)$ divides $g_1(x)$ in $\mathbb{Q}[x]$. Set $g_1(x)= g(x)q$ with $q\in\mathbb{Q}$. Using the content of polynomials, denoted by cont, We have $cont(g_1)=qcont(g)=q$, hence $q\in\mathbb{Z}$. The comparison of the leading coefficients leads to $c^{'}=cq$, hence $c\mid c^{'}$. Finally we get $c=c^{'}$.
\end{proof}
\begin{remark} We have shown that $d(\gamma)\mid c(\gamma)$ and $c(\gamma)\mid d(\gamma)^n$, hence $d(\gamma)$ and $c(\gamma)$ have the same prime factors.
\end{remark}

\begin {example} In the following example we compare the  denominator and the leading coefficient. In particular we look for the condition $d(\gamma)=c(\gamma)$.

Let $\gamma=\sqrt{2}/(q2^k)$ with  $q$ positive and odd and $k\geq 0$. Here $d(\gamma)=q2^k$. $\gamma$ is a root of the equation $2^{2k}q^2\gamma^2-2=0$ which is not always irreducible over $\mathbb{Z}$. In the different cases which follow we  precise the minimal polynomial $g(x)$ of $\gamma$ over $\mathbb{Z}$.

$\bullet$ $k=0$, $g(x)=q^2x^2-2$, hence $d(\gamma)=q$ and $c(\gamma)=q^2>d(\gamma)$ except if $q=1$, i.e. $\gamma=\sqrt{2}$.

$\bullet$ $k=1$ and $q=1$. Here  $g(x)=2x^2-1$, $d(\gamma)=c(\gamma)=2$.

$\bullet$ $k=1$ and $q\geq 2$. Here  $g(x)=2q^2x^2-1$, hence $d(\gamma)=2q$ and $c(\gamma)=2q^2>d(\gamma)$.

$\bullet$ $k\geq 2$ and $q\geq 1$. Here  $g(x)=2^{2k-1}q^2x^2-1$, hence $d(\gamma)=2^kq$ and $c(\gamma)=2^{2k-1}q^2>d(\gamma)$.

 We thus get $c(\gamma)=d(\gamma)$ if and only if $\gamma=\sqrt{2}$ or $\gamma=\sqrt{2}/2$. This example suggests that we almost always  have $c(\gamma)>d(\gamma)$.
\end{example}
\section{Result of Arno-Robinson-Wheeler}
These three authors have shown that the density in $\overline{\mathbb{Q}}$ of the algebraic numbers $\gamma$ such that $c(\gamma)=d(\gamma)$ is equal to $1/\zeta(3)=0.8319...$. These numbers will be called Arno et al. numbers.
The height of a polynomial with integral coefficients, $P(x)=a_kx^k+\ldots+a_0$, is defined by $H(P)=\max_i|a_i|$. The height of an algebraic number is the height of its minimal polynomial over $\mathbb{Z}$. To prove their result, Arno et al. define the sets \begin{equation*}{\cal{A}}_d(H)=\{\gamma\in\overline{\mathbb{Q}}, deg(\gamma)=d,\,\,\mbox{and}\,\, H(\gamma)\leq H\}\,\, \mbox{and}\end{equation*} \begin{equation*}{\hat{\cal{A}}}_d(H)=\{\gamma\in{\cal{A}}_{d}(H)\}, c(\gamma)=d(\gamma)\},\end{equation*}
 where $H$ is a positive integer and $H(\gamma)$ denotes the height of $\gamma$. After obtaining an asymptotic formula of
$|{\hat{\cal{A}}}_d(H)|/|{\cal{A}}_d(H)|$, they deduce that $$\lim_{d\to\infty}\lim_{H\to \infty}|\cup_{k\leq d}{\hat{\cal{A}}}_k(H)|/|\cup_{k\leq d}{\cal{A}}_k(H)|=1/\zeta(3).$$
This result makes appear that the Arno et al. numbers are very frequent in the set of the algebraic numbers. In the sequel we define a partition of $\overline{\mathbb{Q}}$ which suggests that the Arno et al. numbers are rare. With the characterizations of $d(\gamma)$ and $c(\gamma)$ given above this result of Arno et al. seems surprising. Actually $d(\gamma)$ is the smallest positive integer $d$ such that $d\gamma$ is an algebraic integer. In contrast,
$c(\gamma)$ is the smallest positive integer $c$ such that all the products \begin{equation*}c(\gamma_1+\ldots+\gamma_n), c(\gamma_1\gamma_2+\ldots+\gamma_{n-1}\gamma_n),\ldots,c\gamma_1\cdots\gamma_n
\end{equation*}
are rational integers. For the first product, we clearly see that  $c$ may be replaced by $d$, but for the others? Arno's et al. result tells us that it is also true with a high probability.

\section{Second example}
\begin
{definition}  Let $K$ be a number field of degree $n$ over $\mathbb{Q}$, $\theta$ be an algebraic integer of $K$, and $ f(x)=x^n+a_{n-1}x^{n-1}+\ldots+a_1x+a_0$ the characteristic polynomial of $\theta$ over $\mathbb{Q}$. The norm  of  $\theta$ over $\mathbb{Q}$, denoted by $N_{\mathbb{Q}(\theta)/\mathbb{Q}}(\theta)$, is defined by $N_{\mathbb{Q}(\theta)/\mathbb{Q}}(\theta)= (-1)^{n}a_{0}$.
\end{definition}
\begin{proposition} Let $\gamma=\theta/d$ be an algebraic number of degree $n\geq 2$, where $\theta$ is an algebraic integer and $d=d(\gamma)$. Suppose that $\gamma$ is an Arno et al. number, then $d^{n-1}\mid N_{\mathbb{Q}(\theta)/\mathbb{Q}}(\theta)$.
\end{proposition}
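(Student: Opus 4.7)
The plan is to transfer the divisibility question from $\theta$ to a comparison between the minimal polynomial of $\theta$ and the minimal polynomial of $\gamma$, connected by the substitution $x \mapsto dx$. Because $\theta$ generates the same field as $\gamma$ and $[\mathbb{Q}(\gamma):\mathbb{Q}]=n$, the characteristic polynomial $f(x)=x^n+a_{n-1}x^{n-1}+\cdots+a_0$ of $\theta$ is in fact its minimal polynomial over $\mathbb{Z}$, and by definition $N_{\mathbb{Q}(\theta)/\mathbb{Q}}(\theta)=(-1)^n a_0$. So it suffices to prove $d^{n-1}\mid a_0$.

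The key step is to substitute $x=dy$ in $f$. Since $\gamma=\theta/d$, the polynomial
\begin{equation*}
f(dy)=d^{n}y^{n}+a_{n-1}d^{n-1}y^{n-1}+\cdots+a_{1}dy+a_{0}
\end{equation*}
has $\gamma$ as a root and has degree $n$. Writing $g(y)=c_n y^n+\cdots+c_0$ for the minimal polynomial of $\gamma$ over $\mathbb{Z}$, irreducibility of $g$ together with the degree count forces $f(dy)=\lambda\, g(y)$ in $\mathbb{Q}[y]$ for some nonzero rational $\lambda$. Comparing leading coefficients gives $\lambda=d^{n}/c_{n}$, and by Proposition~1 we know $c_n=c(\gamma)$ divides $d^{n}$, so $\lambda$ is a positive integer.

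Now the Arno et al. hypothesis $c(\gamma)=d(\gamma)$, i.e.\ $c_n=d$, plugs in to give $\lambda=d^{n-1}$. Comparing constant terms in $f(dy)=d^{n-1}g(y)$ then yields $a_0=d^{n-1}c_0$, so $d^{n-1}\mid a_0$, hence $d^{n-1}\mid N_{\mathbb{Q}(\theta)/\mathbb{Q}}(\theta)$.

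There is no real obstacle here; the proof is essentially one well-chosen substitution followed by two coefficient comparisons. The only subtle point to verify is that $f(dy)$ and $g(y)$ are scalar multiples of each other in $\mathbb{Q}[y]$, which comes from the fact that $g$ is irreducible, has $\gamma$ as a root, and matches the degree of $f(dy)$; and that the resulting scalar $d^{n}/c_{n}$ is an integer, which is exactly the divisibility $c(\gamma)\mid d^n$ already established in the preceding proposition.
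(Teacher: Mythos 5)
Your proof is correct and follows essentially the same route as the paper: both substitute $x\mapsto dx$ into the minimal polynomial of $\theta$, identify the resulting degree-$n$ polynomial with a scalar multiple of the minimal polynomial $g$ of $\gamma$ over $\mathbb{Z}$, use the Arno et al.\ hypothesis $c(\gamma)=d$ to pin down the scalar as $d^{n-1}$, and compare constant terms to get $d^{n-1}\mid a_0$. The only cosmetic difference is that the paper first divides $f(dx)$ by $d^{n-1}$ so that the scalar becomes $1$, whereas you carry the factor $\lambda=d^{n-1}$ explicitly.
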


\begin{proof} Let
\begin{equation*} f(x)=x^n+a_{n-1}x^{n-1}+\ldots+a_1x+a_0
\end{equation*}
be the minimal polynomial of $\theta$ over $\mathbb{Q}$. Since $f(d\gamma)=0$, we have \begin{equation*}
d\gamma^n+a_{n-1}\gamma^{n-1}+\cdots+(a_1/d^{n-2})\gamma+(a_0/d^{n-1})=0.
\end{equation*}
The minimal polynomial of $\gamma$ over $\mathbb{Z}$ has the form
\begin{equation*}
g(x)=dx^n+c_{n-1}x^{n-1}+\ldots+c_1x+c_0.
\end{equation*}
It follows that $g(x)$ divides the polynomial \begin{equation*}
dx^n+a_{n-1}x^{n-1}+\cdots+(a_1/d^{n-2})x+(a_0/d^{n-1}).
\end{equation*}
in $\mathbb{Q}[x]$. Let $q\in\mathbb{Q}$ such that \begin{equation*}
dx^n+a_{n-1}x^{n-1}+\cdots+(a_1/d^{n-2})x+(a_0/d^{n-1})=qg(x).
\end{equation*}
Identifying the leading coefficients, we get $q=1$ and since $c_0$ is an integer, then $d^{n-1}\mid a_0$.
\end{proof}

Make a partition of $\overline{\mathbb{Q}}\setminus \mathbb{Q} $ by putting in the same  class, say $C(\theta)$, all the algebraic numbers whose numerator is equal to the algebraic integer $\theta$. The preceding proposition  shows that any class $C(\theta)$ contains a finite number (may be  $0$) of Arno et al. numbers. We thus have a second example of a set, namely $\overline{\mathbb{Q}}\setminus \mathbb{Q} $, and the property $(\bf P)$ states: $\gamma$ is an Arno et al. number.

We have excluded from $\overline{\mathbb{Q}}$ the rational numbers because they are Arno et al. numbers and any corresponding class $C(\theta)$ is infinite.

One may think that the Arno et al. numbers are mostly frequent when their degrees  are large enough. But, let $\gamma=\theta/d$ be an Arno et al. algebraic number of degree $n$. Let $q$ be a positive integer divisible by some prime number $p$ such that $p\not\mid  N_{\mathbb{Q}(\theta)/\mathbb{Q}}(\theta)$, then by the preceding proposition the number $\gamma_q=\theta/q$ has the same degree as $\gamma$ and generates the same field, but it is not an Arno et al. number. Obviously the number of $\gamma_q$'s is infinite.

\section{Denominator and leading coefficient in the formal case }

\begin{proposition} Let $t_0,t_1,\ldots,t_n,x$ be algebraically independent variables over $\mathbb{Z}$ and let
\begin{equation*}
G(x)=t_nx^n+\ldots+t_1x+t_0\in \mathbb{Z}[t_0,t_1,\ldots,t_n][x].
\end{equation*}
Set $\vec t=(t_0,t_1,\ldots,t_n)$. Let $\Gamma$ be a root of $G(x)$ in an algebraic closure of $\mathbb{Q}(\vec t)$.
Let \begin{equation*}
I=\{D(\vec t)\in \mathbb{Z}[\vec t],\,\, D\Gamma \,\,\mbox{is integral over}\,\, \mathbb{Z}[\vec t]   \},
\end{equation*}
then $I$ is a principal ideal of $\mathbb{Z}[\vec t]$ generated by $t_n$.
\end{proposition}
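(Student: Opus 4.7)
The plan is to mirror the proof of the first Proposition of the paper, with the UFD $R:=\mathbb{Z}[\vec t]$ playing the role of $\mathbb{Z}$ and $\mathbb{Q}(\vec t)$ the role of $\mathbb{Q}$, and to exhibit both inclusions between $I$ and $(t_n)$.

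First I would verify that $t_n\in I$ by writing an explicit monic integral dependence of $t_n\Gamma$ over $R$. Multiplying $G(\Gamma)=0$ by $t_n^{n-1}$ and regrouping powers of $\Gamma$ into powers of $t_n\Gamma$, one obtains
\begin{equation*}
(t_n\Gamma)^n+t_{n-1}(t_n\Gamma)^{n-1}+t_{n-2}t_n(t_n\Gamma)^{n-2}+\cdots+t_0\,t_n^{n-1}=0,
\end{equation*}
which shows $t_n\Gamma$ is integral over $R$, whence $t_n\in I$.

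For the reverse inclusion I would first identify $G(x)/t_n$ as the minimal polynomial of $\Gamma$ over $\mathbb{Q}(\vec t)$. Viewed inside $\mathbb{Z}[t_0,\ldots,t_n,x]$, the polynomial $G$ is of degree $1$ in $t_0$ with leading coefficient $1$, so it is prime in this UFD; moreover its content as a polynomial in $x$ over $R$ is $\gcd(t_0,\ldots,t_n)=1$, and Gauss's lemma then promotes this to irreducibility in $\mathbb{Q}(\vec t)[x]$. Consequently, for any $D\in R$ the minimal polynomial of $D\Gamma$ over $\mathbb{Q}(\vec t)$ is
\begin{equation*}
M(x)=x^n+\frac{t_{n-1}}{t_n}D\,x^{n-1}+\frac{t_{n-2}}{t_n}D^2x^{n-2}+\cdots+\frac{t_0}{t_n}D^n.
\end{equation*}

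Finally, assume $D\in I$. Since $R$ is a UFD it is integrally closed in $\mathbb{Q}(\vec t)$, so the minimal polynomial of any element integral over $R$ must lie in $R[x]$. Applied to $D\Gamma$ this forces $t_n\mid t_{n-i}D^i$ in $R$ for every $i=1,\ldots,n$. Taking $i=1$ and using that the distinct indeterminates $t_n$ and $t_{n-1}$ are coprime in $R$ gives $t_n\mid D$, so $I\subseteq(t_n)$ and hence $I=(t_n)$. I expect the one genuinely delicate point to be the Gauss-lemma step used to pin down the minimal polynomial of $\Gamma$; the remaining manipulations are a direct transcription of the divisibility and content argument already used in the proof of the first Proposition.
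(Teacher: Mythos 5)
Your proof is correct and follows essentially the same route as the paper's: write the monic equation satisfied by $D\Gamma$ with coefficients $t_{n-i}D^i/t_n$ and conclude $t_n\mid D$. In fact you supply the justifications the paper leaves implicit --- the irreducibility of $G$ over $\mathbb{Q}(\vec t)$ via Gauss's lemma and the integral closedness of the UFD $\mathbb{Z}[\vec t]$, which together force those coefficients to lie in $\mathbb{Z}[\vec t]$ --- so nothing further is needed.
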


\begin{proof} Obviously $t_n\in I$. Let $D(\vec t)\in I$, then since  \begin{equation*} t_n\Gamma^n+t_{n-1}\Gamma^{n-1}+\ldots+t_1\Gamma+t_0=0,
\end{equation*}
we have
\begin{equation*}
\big(D(\vec t)\Gamma\big)^n+\frac {t_{n-1}D(\vec t)}{t_n}\big(D(\vec t)\Gamma\big)^{n-1}+\ldots+\frac {t_1D(\vec t)^{n-1}}{t_n}\big(D(\vec t)\Gamma\big)+\frac {t_0D(\vec t)}{t_n}=0,
\end{equation*}
hence $t_n\mid D(\vec t)$ thus $I=t_n\mathbb{Z}[\vec t]$.
\end{proof}

This proposition shows that if $\Gamma$ is a root of the generic polynomial, then its denominator and leading coefficient are equal. Since the minimal polynomial over $\mathbb{Z}$ of any algebraic number may be obtained by specializing the coefficients of the generic polynomial, it is tempting to conclude that this confirms the result of Arno et al.

\textbf{Question:} Is it  possible to find an uncountable set $E$ and a property $(\bf P)$ satisfying the conditions stated in the introduction? Our feeling is that the answer is negative.


\author{Mohamed Ayad\\
	Laboratoire de Math{\'e}matiques Pures et Appliqu{\'e}es\\
	Universit\'e du Littoral, F-62228 Calais, France\\
	ayadmohamed502@yahoo.com\\
	Omar~Kihel\\ Department of Mathematics\\ Brock University, Ontario, Canada L2S 3A1\\okihel@brocku.ca}

\end{document}